\newtheorem{theorem}{Theorem}
\newtheorem{example}[theorem]{Example}
\newtheorem{proposition}[theorem]{Proposition}
\newtheorem{corollary}[theorem]{Corollary}
\newtheorem{proof}[theorem]{Proof}
\begin{document}

\title{ON GENERALIZED SPHERICAL SURFACES IN EUCLIDEAN SPACES}
\author{Beng\"{u} K\i l\i \c{c} Bayram, Kadri Arslan \& Bet\"{u}l Bulca}
\maketitle

\begin{abstract}
In the present study we consider the generalized rotational surfaces in
Euclidean spaces. Firstly, we consider generalized spherical curves in
Euclidean $(n+1)-$space $\mathbb{E}^{n+1}$. Further, we introduce some kind
of generalized spherical surfaces in Euclidean spaces $\mathbb{E}^{3}$ and $%
\mathbb{E}^{4}$ respectively. We have shown that the generalized spherical
surfaces of first kind in $\mathbb{E}^{4}$ are known as rotational surfaces,
and the second kind generalized spherical surfaces are known as meridian
surfaces in $\mathbb{E}^{4}$. We have also calculated the Gaussian, normal
and mean curvatures of these kind of surfaces. Finally, we give some
examples.
\end{abstract}

\section{\textbf{Introduction}}

\footnote{%
2010 AMS \textit{Mathematics Subject Classification}. 53C40, 53C42
\par
\textit{Key words and phrases}: Second fundamental form, Gaussian curvature,
rotational surface, Otsuiki surface}The Gaussian curvature and mean
curvature of the surfaces in Euclidean spaces play an important role in
differential geometry. Especially, surfaces with constant Gaussian curvature
\cite{Wo}, and constant mean curvature conform nice classes of surfaces
which are important for surface modelling \cite{BABO2}. Surfaces with
constant negative curvature are known as pseudo-spherical surfaces \cite{GN}.

Rotational surfaces in Euclidean spaces are also important subject of
differential geometry. The rotational surfaces in $\mathbb{E}^{3}$ are
called surface of revolution. Recently V. Velickovic classified all
rotational surfaces in $\mathbb{E}^{3}$ with constant Gaussian curvature 
\cite{Ve}. Rotational surfaces in $\mathbb{E}^{4}$ was first introduced by
C. Moore in $1919$. In the recent years some mathematicians have taken an
interest in the rotational surfaces in $\mathbb{E}^{4}$, for example G.
Ganchev and V. Milousheva \cite{GM1}, U. Dursun and N. C. Turgay \cite{DT},
K. Arslan, at all. \cite{ABCO} and D.W.Yoon  \cite{Yo}. In \cite{GM1}, the
authors applied invariance theory of surfaces in the four dimensional
Euclidean space to the class of general rotational surfaces whose meridians
lie in two-dimensional planes in order to find all minimal super-conformal
surfaces. These surfaces were further studied in \cite{DT}, which found all
minimal surfaces by solving the differential equation that characterizes
minimal surfaces. They then determined all pseudo-umbilical general
rotational surfaces in $\mathbb{E}^{4}$. K. Arslan et.al in \cite{ABCO} gave
the necessary and sufficient conditions for generalized rotation surfaces to
become pseudo-umbilical, they also shown that each general rotational
surface is a Chen surface in $\mathbb{E}^{4}$ and gave some special classes
of generalized rotational surfaces as examples. See also \cite{Cu} and \cite%
{BABO1} rotational surfaces with Constant Gaussian Curvature in Four-Space.
For higher dimensional case N.H. Kuiper defined rotational embedded
submanifolds in Euclidean spaces \cite{Ku}.

The meridian surfaces in $\mathbb{E}^{4}$ was first introduced by G. Ganchev
and V. Milousheva (See, \cite{GM2} and \cite{ABM}) which are the special
kind of rotational surfaces. Basic source of examples of surfaces in
4-dimensional Euclidean or pseudo-Euclidean space are the standard
rotational surfaces and the general rotational surfaces. Further, Ganchev
and Milousheva defined another class of surfaces of rotational type which
are one-parameter system of meridians of a rotational hypersurface. They
constructed a family of surfaces with at normal connection lying on a
standard rotational hypersurface in $\mathbb{E}^{4}$ as a meridian surfaces.
The geometric construction of the meridian surfaces is different from the
construction of the standard rotational surfaces with two dimensional axis
in $\mathbb{E}^{4}$.

This paper is organized as follows: Section $2$ gives some basic concepts of
the surfaces in $\mathbb{E}^{n}$. Section $3$ explains some geometric
properties of spherical curves $\mathbb{E}^{n+1}$. Section $4$ tells about
the generalized spherical surfaces in $\mathbb{E}^{n+m}$. Further this
section provides some basic properties of generalized spherical surfaces in $%
\mathbb{E}^{4}$ and the structure of their curvatures. We also shown that
every generalized spherical surfaces in $\mathbb{E}^{4}$ have constant
Gaussian curvature $K=1/c^{2}$. Finally, we present some examples of
generalized spherical surfaces in $\mathbb{E}^{4}$.

\section{\textbf{Basic Concepts}}

Let $M$ be a smooth surface in $\mathbb{E}^{n}$ given with the patch $X(u,v)$
: $(u,v)\in D\subset \mathbb{E}^{2}$. The tangent space to $M$ at an
arbitrary point $p=X(u,v)$ of $M$ span $\left \{ X_{u},X_{v}\right \} $. In
the chart $(u,v)$ the coefficients of the first fundamental form of $M$ are
given by 
\begin{equation}
g_{11}=\left \langle X_{u},X_{u}\right \rangle ,g_{12}=\left \langle
X_{u},X_{v}\right \rangle ,g_{22}=\left \langle X_{v},X_{v}\right \rangle ,
\label{A1}
\end{equation}%
where $\left \langle ,\right \rangle $ is the Euclidean inner product. We
assume that $W^{2}=g_{11}g_{22}-g_{12}^{2}\neq 0,$ i.e. the surface patch $%
X(u,v)$ is regular.\ For each $p\in M$, consider the decomposition $T_{p}%
\mathbb{E}^{n}=T_{p}M\oplus T_{p}^{\perp }M$ where $T_{p}^{\perp }M$ is the
orthogonal component of $T_{p}M$ in $\mathbb{E}^{n}$.

Let $\chi (M)$ and $\chi ^{\perp }(M)$ be the space of the smooth vector
fields tangent to $M$ and the space of the smooth vector fields normal to $M$%
, respectively. Given any local vector fields $X_{1},$ $X_{2}$ tangent to $M$%
, consider the second fundamental map $h:\chi (M)\times \chi (M)\rightarrow
\chi ^{\perp }(M);$%
\begin{equation}
h(X_{i},X_{_{j}})=\widetilde{\nabla }_{X_{_{i}}}X_{_{j}}-\nabla
_{X_{_{i}}}X_{_{j}}\text{ \  \  \ }1\leq i,j\leq 2  \label{A2}
\end{equation}%
where $\nabla $ and $\overset{\sim }{\nabla }$ are the induced connection of 
$M$ and the Riemannian connection of $\mathbb{E}^{n}$, respectively. This
map is well-defined, symmetric and bilinear \cite{Ch1}.

For any arbitrary orthonormal frame field $\left \{
N_{1},N_{2},...,N_{n-2}\right \} $ of $M$, recall the shape operator $A:\chi
^{\perp }(M)\times \chi (M)\rightarrow \chi (M);$%
\begin{equation}
A_{N_{k}}X_{j}=-(\widetilde{\nabla }_{X_{j}}N_{k})^{T},\text{ \  \  \ }%
X_{j}\in \chi (M).  \label{A3}
\end{equation}%
This operator is bilinear, self-adjoint and satisfies the following equation:%
\begin{equation}
\left \langle A_{N_{k}}X_{j},X_{i}\right \rangle =\left \langle
h(X_{i},X_{j}),N_{k}\right \rangle =L_{ij}^{k}\text{, }1\leq i,j\leq 2;\text{
}1\leq k\leq n-2  \label{A4}
\end{equation}%
where $L_{ij}^{k}$ are the coefficients of the second fundamental form. The
equation (\ref{A2}) is called Gaussian formula, and%
\begin{equation}
h(X_{i},X_{j})=\overset{n-2}{\underset{k=1}{\sum }}L_{ij}^{k}N_{k},\  \  \  \  \
1\leq i,j\leq 2  \label{A5}
\end{equation}%
holds. Then the Gauss curvature $K$ of a regular patch $X(u,v)$ is given by

\begin{equation}
K=\frac{1}{W^{2}}\sum%
\limits_{k=1}^{n-2}(L_{11}^{k}L_{22}^{k}-(L_{12}^{k})^{2}).  \label{A6}
\end{equation}

Further, the mean curvature vector of a regular patch $X(u,v)$ is given by 
\begin{equation}
\overrightarrow{H}=\frac{1}{2W^{2}}%
\sum_{k=1}^{n-2}(L_{11}^{k}g_{22}+L_{22}^{k}g_{11}-2L_{12}^{k}g_{12})N_{k}.
\label{A7}
\end{equation}%
We call the functions 
\begin{equation}
H_{k}=\frac{(L_{11}^{k}g_{22}+L_{22}^{k}g_{11}-2L_{12}^{k}g_{12})}{2W^{2}},
\label{A8}
\end{equation}%
the $k.th$ mean curvature functions of the given surface. The norm of the
mean curvature vector $H=\left \Vert \overrightarrow{H}\right \Vert $ is
called the mean curvature of $M$. Recall that a surface $M$ is said to be 
\textit{flat} (resp. \textit{minimal})\textit{\ } if its Gauss curvature
(resp. mean curvature vector) vanishes identically \cite{Ch2}, \cite{Ch3}.

The normal curvature $K_{N}$ of $M$ is defined by (see \cite{DDVV}) 
\begin{equation}
K_{N}=\left \{ \overset{n-2}{\underset{1=\alpha <\beta }{\sum }}\left
\langle R^{\bot }(X_{1},X_{2})N_{\alpha },N_{\beta }\right \rangle
^{2}\right \} ^{1/2}.  \label{A9}
\end{equation}%
where%
\begin{equation}
R^{\bot }(X_{i},X_{j})N_{\alpha }=h(X_{i},A_{N_{\alpha
}}X_{j})-h(X_{j},A_{N_{\alpha }}X_{i}),  \label{A10}
\end{equation}%
and 
\begin{equation}
\text{ \  \  \  \  \ }\left \langle R^{\bot }(X_{i},X_{j})N_{\alpha },N_{\beta
}\right \rangle =\left \langle [A_{N_{\alpha }},A_{N_{\beta
}}]X_{i,}X_{j}\right \rangle ,  \label{A11}
\end{equation}%
is called the \textit{equation of Ricci}. We observe that the normal
connection $D$ of $M$ is flat if and only if $K_{N}=0,$ and by a result of
Cartan, this equivalent to the diagonalisability of all shape operators $%
A_{N_{\alpha }}$ \cite{Ch1}.\ 

\section{\textbf{Generalized Spherical Curves}}

Let $\gamma $ be a regular oriented curve in $\mathbb{E}^{n+1}$ that does
not lie in any subspace of $\mathbb{E}^{n+1}$. From each point of the curve $%
\gamma $ one can draw a segment of unit length along the normal line
corresponding to the chosen orientation. The ends of these segments describe
a new curve $\beta$. The curve $\gamma \in \mathbb{E}^{n+1}$ is called a 
\textit{generalized spherical curve} if the curve $\beta $ lies in a certain
subspace $\mathbb{E}^{n}$of $\mathbb{E}^{n+1}.$ The curve $\beta $ is called
the trace of $\gamma $ \cite{GN}$.$ Let 
\begin{equation}
\gamma (u)=\left( f_{1}(u),...,f_{n+1}(u)\right) ,  \label{B1}
\end{equation}%
be the radius vector of the curve $\gamma $ given with arclangth
parametrization $u$, i.e., $\left \Vert \gamma ^{\prime }(u)\right \Vert =1.$
The curve $\beta $ is defined by the radius vector%
\begin{equation}
\beta (u)=(\gamma +c^{2}\gamma ^{\prime \prime })(u)=\left(
(f_{1}+c^{2}f_{1}^{\prime \prime })(u),...,(f_{n+1}+c^{2}f_{n+1}^{\prime
\prime })(u)\right) ,  \label{B2}
\end{equation}%
where $c$ is a real constant. If $\gamma $ is a generalized spherical curve
of $\mathbb{E}^{n+1}$ then by definition the curve $\beta $ lies in the
hyperplane $\mathbb{E}^{n}$ if and only if $f_{n+1}+c^{2}f_{n+1}^{\prime
\prime }=0.$ Consequently, this equation has a non-trivial solution%
\begin{equation*}
f_{n+1}(u)=\lambda \cos \left( \frac{u}{c}+c_{0}\right) ,
\end{equation*}%
with some constants $\lambda $ and $c_{0}.$ By a suitable choose of
arclenght we may assume that 
\begin{equation}
f_{n+1}(u)=\lambda \cos \left( \frac{u}{c}\right) ,  \label{B3}
\end{equation}%
with $\lambda >0.$ Thus, the radius vector of the generalized spherical
curve $\gamma $ takes the form%
\begin{equation}
\gamma (u)=\left( f_{1}(u),...,f_{n}(u),\lambda \cos \left( \frac{u}{c}%
\right) \right) .  \label{B4}
\end{equation}%
Moreover, the condition for the arclength parameter $u$ implies that%
\begin{equation}
(f_{1}^{\prime })^{2}+...+(f_{n}^{\prime })^{2}=1-\frac{\lambda ^{2}}{c^{2}}%
\sin ^{2}\left( \frac{u}{c}\right) .  \label{B5}
\end{equation}%
For convenience, we introduce a vector function%
\begin{equation*}
\phi (u)=\left( f_{1}(u),...,f_{n}(u);0\right) .
\end{equation*}%
Then the radius vector (\ref{B4}) can be represented in the form%
\begin{equation}
\gamma (u)=\phi (u)+\lambda \cos \left( \frac{u}{c}\right) e_{n+1},
\label{B6}
\end{equation}%
where $e_{n+1}=(0,0,...,0,1).$ Consequently, the condition (\ref{B5}) gives%
\begin{equation}
\left \Vert \phi ^{\prime }(u)\right \Vert ^{2}=1-\frac{\lambda ^{2}}{c^{2}}%
\sin ^{2}\left( \frac{u}{c}\right) .  \label{B7}
\end{equation}%
Hence, the radius vector of the trace curve $\beta $ becomes%
\begin{equation}
\beta (u)=\phi (u)+c^{2}\phi ^{\prime \prime }(u).  \label{B8}
\end{equation}

Consider an arbitrary unit vector function%
\begin{equation}
a(u)=\left( a_{1}(u),...,a_{n}(u);0\right) ,  \label{B9}
\end{equation}%
in $\mathbb{E}^{n+1}$ and use this function to construct a new vector
function%
\begin{equation}
\phi (u)=\int \sqrt{1-\frac{\lambda ^{2}}{c^{2}}\sin ^{2}\left( \frac{u}{c}%
\right) }a(u)du,  \label{B10}
\end{equation}%
whose last coordinate is equal to zero. Consequently, the vector function $%
\phi (u)$ satisfies the condition (\ref{B7}) and generates a generalized
spherical curve with radius vector (\ref{B6}).

\begin{example}
The ordinary circular curve in $\mathbb{E}^{2}$ is given with the radius
vector 
\begin{equation}
\gamma (u)=\left( \int \sqrt{1-\frac{\lambda ^{2}}{c^{2}}\sin ^{2}\left( 
\frac{u}{c}\right) }du,\lambda \cos \left( \frac{u}{c}\right) \right) .
\label{B11}
\end{equation}
\end{example}

\begin{example}
Consider the unit vector $a(u)=\left( \cos \alpha (u),\sin \alpha
(u);0\right) $ in $\mathbb{E}^{2}$ . Then using (\ref{B10}), the
corresponding generalized spherical curve in $\mathbb{E}^{3}$ is defined by
the radius vector%
\begin{eqnarray}
f_{1}(u) &=&\int \sqrt{1-\frac{\lambda ^{2}}{c^{2}}\sin ^{2}\left( \frac{u}{c%
}\right) }\cos \alpha (u)du,  \notag \\
f_{2}(u) &=&\int \sqrt{1-\frac{\lambda ^{2}}{c^{2}}\sin ^{2}\left( \frac{u}{c%
}\right) }\sin \alpha (u)du,  \label{B12} \\
f_{3}(u) &=&\lambda \cos \left( \frac{u}{c}\right) .  \notag
\end{eqnarray}
\end{example}

\begin{example}
Consider the unit vector%
\begin{equation*}
a(u)=\left( \cos \alpha (u),\cos \alpha (u)\sin \alpha (u),\sin ^{2}\alpha
(u);0\right)
\end{equation*}%
in $\mathbb{E}^{3}$ . Then using (\ref{B10}), the corresponding generalized
spherical curve in $\mathbb{E}^{4}$ is defined by the radius vector%
\begin{eqnarray}
f_{1}(u) &=&\int \sqrt{1-\frac{\lambda ^{2}}{c^{2}}\sin ^{2}\left( \frac{u}{c%
}\right) }\cos \alpha (u)du,  \notag \\
f_{2}(u) &=&\int \sqrt{1-\frac{\lambda ^{2}}{c^{2}}\sin ^{2}\left( \frac{u}{c%
}\right) }\cos \alpha (u)\sin \alpha (u)du,  \label{B13} \\
f_{3}(u) &=&\int \sqrt{1-\frac{\lambda ^{2}}{c^{2}}\sin ^{2}\left( \frac{u}{c%
}\right) }\sin ^{2}\alpha (u)du;  \notag \\
f_{4}(u) &=&\lambda \cos \left( \frac{u}{c}\right) .  \notag
\end{eqnarray}
\end{example}

\section{\textbf{Generalized Spherical Surfaces}}

Consider the space $\mathbb{E}^{n+1}=\mathbb{E}^{n}\oplus \mathbb{E}^{1}$ as
a subspace of $\mathbb{E}^{n+m}=\mathbb{E}^{n}\oplus \mathbb{E}^{m},$ $m\geq
2$ and Cartesian coordinates $x_{1},x_{2},...,x_{n+m}$ and orthonormal basis 
$e_{1},...,e_{n+m}$ in $\mathbb{E}^{n+m}$. Let $M^{2}$ be a local surface
given with the regular patch (radius vector) $\mathbb{E}^{n}\subset $ $%
\mathbb{E}^{n+1}$ 
\begin{equation}
X(u,v)=\phi (u)+\lambda \cos \left( \frac{u}{c}\right) \rho (v),  \label{C1}
\end{equation}%
where the vector function $\phi (u)=\left(
f_{1}(u),...,f_{n}(u),0,...,0\right) ,$ satisfies (\ref{B7}) and generates a
generalized spherical curve with radius vector

\begin{equation}
\gamma (u)=\phi (u)+\lambda \cos \left( \frac{u}{c}\right) e_{n+1},
\label{C2}
\end{equation}%
and the vector function $\rho (v)=\left(
0,...,0,g_{1}(v),...,g_{m}(v)\right) ,$ satisfying the conditions $%
\left
\Vert \rho (v)\right \Vert =1,\left \Vert \rho ^{\prime
}(v)\right
\Vert =1,$ and specifies a curve $\rho =\rho (v)$ parametrized by
a natural parameter on the unit sphere $S^{m-1}\subset $ $\mathbb{E}^{m}.$
Consequently, the surface $M$ $^{2}$ is obtained as a result of the rotation
of the generalized spherical curve $\gamma $ along the spherical curve $\rho
,$ which is called \textit{generalized Spherical surface} in $\mathbb{E}%
^{n+m}$.

In the sequel, we will consider some type of generalized spherical surface;

\bigskip

\textbf{CASE }$\mathbf{I}$\textbf{.} For $n=1$ and $m=2$, the radius vector (%
\ref{C1}) satisfying the indicated properties describes the \textit{%
spherical surface} in $\mathbb{E}^{3}$ with the radius vector%
\begin{equation}
X(u,v)=(\phi (u),\lambda \cos \left( \frac{u}{c}\right) \cos v,\lambda \cos
\left( \frac{u}{c}\right) \sin v),  \label{C3}
\end{equation}%
where the function $\phi (u)$ is found from the relation $\left \vert \phi
^{\prime }(u)\right \vert =\sqrt{1-\frac{\lambda ^{2}}{c^{2}}\sin ^{2}\left( 
\frac{u}{c}\right) }$. The surface given with the parametrization (\ref{C3})
is a kind of surface of revolution which is called ordinary sphere.

The tangent space is spanned by the vector fields%
\begin{eqnarray*}
X_{u}(u,v) &=&(\phi ^{\prime }(u),\frac{-\lambda }{c}\sin \left( \frac{u}{c}%
\right) \cos v,\frac{-\lambda }{c}\sin \left( \frac{u}{c}\right) \sin v), \\
X_{v}(u,v) &=&(0,-\lambda \cos \left( \frac{u}{c}\right) \sin v,\lambda \cos
\left( \frac{u}{c}\right) \cos (v)).
\end{eqnarray*}%
\ 

Hence, the coefficients of the first fundamental form of the surface are%
\begin{eqnarray*}
g_{11} &=&\text{ }<X_{u}(u,v),X_{u}(u,v)>\text{ }=1 \\
g_{12} &=&\text{ }<X_{u}(u,v),X_{v}(u,v)>\text{ }=0 \\
g_{22} &=&\text{ }<X_{v}(u,v),X_{v}(u,v)>\text{ }=\lambda ^{2}\cos
^{2}\left( \frac{u}{c}\right) ,
\end{eqnarray*}%
where $\left \langle ,\right \rangle $ is the standard scalar product in $%
\mathbb{E}^{3}$.

For a regular patch $X(u,v)$ the unit normal vector field or surface normal $%
N$ is defined by 
\begin{eqnarray*}
N(u,v) &=&\frac{X_{u}\times X_{v}}{\parallel X_{u}\times X_{v}\parallel }%
(u,v) \\
&=&\left( -\frac{\lambda }{c}\sin \left( \frac{u}{c}\right) ,-\phi ^{\prime
}(u)\cos v,-\phi ^{\prime }(u)\sin v\right) ,
\end{eqnarray*}%
where 
\begin{equation*}
\left \Vert X_{u}\times X_{v}\right \Vert =\sqrt{g_{11}g_{22}-g_{12}^{2}}%
=\lambda \cos \left( \frac{u}{c}\right) \neq 0.
\end{equation*}

The second partial derivatives of $X(u,v)$ are expressed as follows%
\begin{eqnarray*}
X_{uu}(u,v) &=&(\phi {}^{\prime \prime }(u),\frac{-\lambda }{c^{2}}\cos
\left( \frac{u}{c}\right) \cos v,\frac{-\lambda }{c^{2}}\cos \left( \frac{u}{%
c}\right) \sin v), \\
X_{uv}(u,v) &=&(0,\frac{\lambda }{c}\sin \left( \frac{u}{c}\right) \sin v,-%
\frac{\lambda }{c}\sin \left( \frac{u}{c}\right) \cos (v)), \\
X_{vv}(u,v) &=&(0,-\lambda \cos \left( \frac{u}{c}\right) \cos v,-\lambda
\cos \left( \frac{u}{c}\right) \sin (v)).
\end{eqnarray*}

Similarly, the coefficients of the second fundamental form of the surface
are 
\begin{eqnarray}
L_{11} &=&\text{ }<X_{uu}(u,v),N(u,v)>\text{ }=-\kappa _{1}(u),  \notag \\
L_{12} &=&\text{ }<X_{uv}(u,v),N(u,v)>\text{ }=0,  \label{C4} \\
L_{22} &=&\text{ }<X_{vv}(u,v),N(u,v)>\text{ }=\phi ^{\prime }{}(u)\lambda
\cos \left( \frac{u}{c}\right)  \notag
\end{eqnarray}%
where 
\begin{equation}
\kappa _{1}(u)=-\frac{\lambda }{c^{2}}\phi ^{\prime }{}(u)\cos \left( \frac{u%
}{c}\right) +\frac{\lambda }{c}\phi {}^{\prime \prime }(u)\sin \left( \frac{u%
}{c}\right) ,  \label{C5}
\end{equation}%
is the differentiable function. Furthermore, substituting (\ref{C4}) into ( %
\ref{A6})-(\ref{A7}) we obtain the following result.

\begin{proposition}
Let $M$ be a spherical surface in $\mathbb{E}^{3}$ given with the
parametrization (\ref{C3}). Then the Gaussian and mean curvature of $M$
become 
\begin{equation*}
K=1/c^{2},
\end{equation*}
and%
\begin{equation*}
H=\frac{\frac{2\lambda ^{2}}{c^{2}}\cos ^{2}\left( \frac{u}{c}\right) -\frac{%
\lambda ^{2}}{c^{2}}+1}{2\lambda \cos \left( \frac{u}{c}\right) \sqrt{1-%
\frac{\lambda ^{2}}{c^{2}}\sin ^{2}\left( \frac{u}{c}\right) }},
\end{equation*}%
respectively.
\end{proposition}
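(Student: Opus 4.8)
The plan is to compute the Gaussian and mean curvatures directly from the definitions in Section 2, using the data already assembled above the Proposition. All of the geometric ingredients are in hand: the first fundamental form coefficients $g_{11}=1$, $g_{12}=0$, $g_{22}=\lambda^{2}\cos^{2}(u/c)$, hence $W^{2}=g_{11}g_{22}-g_{12}^{2}=\lambda^{2}\cos^{2}(u/c)$, and the second fundamental form coefficients in (\ref{C4}), namely $L_{11}=-\kappa_{1}(u)$, $L_{12}=0$, and $L_{22}=\phi'(u)\lambda\cos(u/c)$. Since we are in $\mathbb{E}^{3}$ there is a single normal direction, so the sums in (\ref{A6}) and (\ref{A7}) collapse to one term each.

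First I would compute the Gaussian curvature from (\ref{A6}):
\begin{equation*}
K=\frac{L_{11}L_{22}-(L_{12})^{2}}{W^{2}}=\frac{-\kappa_{1}(u)\,\phi'(u)\lambda\cos\left(\frac{u}{c}\right)}{\lambda^{2}\cos^{2}\left(\frac{u}{c}\right)}.
\end{equation*}
Substituting the expression (\ref{C5}) for $\kappa_{1}(u)$ and simplifying, the numerator becomes
\begin{equation*}
\lambda\cos\left(\tfrac{u}{c}\right)\phi'(u)\left[\frac{\lambda}{c^{2}}\phi'(u)\cos\left(\tfrac{u}{c}\right)-\frac{\lambda}{c}\phi''(u)\sin\left(\tfrac{u}{c}\right)\right].
\end{equation*}
The step that carries the weight is eliminating $\phi'$ and $\phi''$ in favor of elementary functions: differentiating the defining relation (\ref{B7}), i.e. $(\phi')^{2}=1-\frac{\lambda^{2}}{c^{2}}\sin^{2}(u/c)$, gives $\phi'\phi''=-\frac{\lambda^{2}}{c^{3}}\sin(u/c)\cos(u/c)$. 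Combining this with (\ref{B7}) itself lets one rewrite the bracketed quantity so that the troublesome $\phi''$ terms cancel, and the whole expression reduces to $K=1/c^{2}$. This is where I expect the only real obstacle to lie, since it requires recognizing that the $\sin^{2}$ and $\cos^{2}$ pieces recombine via the Pythagorean identity after the substitution, rather than through brute force.

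For the mean curvature I would apply (\ref{A8}) with $k=1$:
\begin{equation*}
H=\frac{L_{11}g_{22}+L_{22}g_{11}-2L_{12}g_{12}}{2W^{2}}=\frac{-\kappa_{1}(u)\lambda^{2}\cos^{2}\left(\frac{u}{c}\right)+\phi'(u)\lambda\cos\left(\frac{u}{c}\right)}{2\lambda^{2}\cos^{2}\left(\frac{u}{c}\right)}.
\end{equation*}
Here $\|\phi'(u)\|=\sqrt{1-\frac{\lambda^{2}}{c^{2}}\sin^{2}(u/c)}$ from (\ref{B7}), and again substituting (\ref{C5}) for $\kappa_{1}$ together with the relation $\phi'\phi''$ derived above lets me express everything through $\cos(u/c)$ and $\sqrt{1-\frac{\lambda^{2}}{c^{2}}\sin^{2}(u/c)}$. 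Using $\sin^{2}=1-\cos^{2}$ to write $1-\frac{\lambda^{2}}{c^{2}}\sin^{2}(u/c)=\frac{\lambda^{2}}{c^{2}}\cos^{2}(u/c)+1-\frac{\lambda^{2}}{c^{2}}$ should collect the numerator into $\frac{2\lambda^{2}}{c^{2}}\cos^{2}(u/c)-\frac{\lambda^{2}}{c^{2}}+1$ over the stated denominator, matching the claimed formula for $H$. The mean curvature computation is longer but poses no conceptual difficulty once the substitution pattern from the Gaussian curvature step is established.
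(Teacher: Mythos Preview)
Your proposal is correct and follows exactly the route the paper indicates: the paper itself simply states ``substituting (\ref{C4}) into (\ref{A6})--(\ref{A7}) we obtain the following result'' without further detail, and your plan to plug in the computed coefficients and simplify via $(\phi')^{2}=1-\frac{\lambda^{2}}{c^{2}}\sin^{2}(u/c)$ and its derivative $\phi'\phi''=-\frac{\lambda^{2}}{c^{3}}\sin(u/c)\cos(u/c)$ is precisely the intended computation. Your writeup in fact supplies the details the paper omits.
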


\begin{corollary}
\cite{Ve} Let $M$ be a spherical surface in $\mathbb{E}^{3}$ given with the
parametrization (\ref{C3}). Then we have the following asertations

$i)$ If $\lambda =c$ then the corresponding surface is a sphere with radius $%
c$ and centered at the origin,

$ii)$ If \ $\lambda >c$ then the corresponding surface is a hyperbolic
spherical surface,

$iii)$ If \ $\lambda <c$ then the corresponding surface is an elliptic
spherical surface.
\end{corollary}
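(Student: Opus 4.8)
The plan is to establish the three cases by directly analyzing the Gaussian curvature relation $K = 1/c^2$ together with the explicit form of the parametrization (\ref{C3}), comparing the resulting surface against the standard models. Since we already know from the preceding proposition that $K = 1/c^2$ is constant and positive, the surface is locally spherical in its intrinsic geometry; the task is to identify precisely which constant-curvature model arises for each relation between $\lambda$ and $c$, and in the borderline case to verify that the surface is globally an honest round sphere.

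First I would treat case $(i)$, $\lambda = c$. Substituting $\lambda = c$ into the constraint $\left| \phi'(u) \right| = \sqrt{1 - \frac{\lambda^2}{c^2}\sin^2\left(\frac{u}{c}\right)}$ gives $\left| \phi'(u)\right| = \left| \cos\left(\frac{u}{c}\right)\right|$, which integrates explicitly to $\phi(u) = c \sin\left(\frac{u}{c}\right)$ (up to a constant of integration absorbed by a choice of origin). Plugging this back into (\ref{C3}) yields $X(u,v) = \left( c\sin\left(\frac{u}{c}\right),\, c\cos\left(\frac{u}{c}\right)\cos v,\, c\cos\left(\frac{u}{c}\right)\sin v\right)$, and a short computation of $\left\Vert X(u,v)\right\Vert^2$ shows it equals $c^2$ identically. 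Hence every point lies on the sphere of radius $c$ centered at the origin, proving $(i)$.

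For cases $(ii)$ and $(iii)$ I would invoke the classification of surfaces of revolution in $\mathbb{E}^3$ with constant positive Gaussian curvature, following Velickovic \cite{Ve}. The standard analysis writes a surface of revolution with profile controlled by a generating function and reduces the constant-curvature condition to an ordinary differential equation whose solutions fall into three families according to whether the rotational radius function $\lambda\cos\left(\frac{u}{c}\right)$ has amplitude exceeding, equalling, or falling short of the sphere radius $c$. Since our parametrization has profile curve of exactly this spherical type with $K = 1/c^2$, the amplitude $\lambda$ of the rotational component relative to $c$ is precisely the parameter distinguishing the three Velickovic families: $\lambda > c$ produces the hyperbolic (bulging) spherical-type surface of revolution, while $\lambda < c$ produces the elliptic (spindle/spheroidal) type. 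I would therefore match (\ref{C3}) term-by-term against the normal forms in \cite{Ve} and read off the classification.

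The main obstacle will be making the identification in $(ii)$ and $(iii)$ rigorous rather than merely citing \cite{Ve}: one must confirm that the integrand $\sqrt{1 - \frac{\lambda^2}{c^2}\sin^2\left(\frac{u}{c}\right)}$ defining $\phi$ stays real and that the parametrization remains regular on the appropriate domain, since for $\lambda > c$ the radicand becomes negative for some $u$, forcing a restriction of the parameter range and explaining the non-closing (hyperbolic) profile, whereas for $\lambda < c$ the radicand is everywhere positive and the profile is globally defined but no longer closes up into a round sphere. Pinning down these domain and regularity issues, and verifying the amplitude $\lambda$ is exactly the branch parameter in Velickovic's trichotomy, is the delicate part; the curvature computation itself is already supplied by the proposition.
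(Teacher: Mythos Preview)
Your proposal is sound, and in fact goes well beyond what the paper does: the corollary is stated with the citation \cite{Ve} and no proof is given in the paper at all---it is simply recorded as a known classification due to Velickovic. Your explicit verification of case $(i)$ by integrating $\phi'(u)=\left|\cos\left(\frac{u}{c}\right)\right|$ to obtain the standard round-sphere parametrization, and your appeal to the trichotomy in \cite{Ve} for cases $(ii)$ and $(iii)$, together with the domain analysis of the radicand $1-\frac{\lambda^{2}}{c^{2}}\sin^{2}\left(\frac{u}{c}\right)$, constitute a genuine argument where the paper offers only a reference. There is nothing to correct; your treatment is strictly more complete than the paper's.
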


\textbf{CASE }$\mathbf{II}$\textbf{. } For $n=2$ and $m=2$, the radius
vector (\ref{C1}) satisfying the indicated properties describes the \textit{%
generalized spherical surface }given with the radius vector 
\begin{equation}
X(u,v)=(f_{1}(u),f_{2}(u),\lambda \cos \left( \frac{u}{c}\right) \cos
v,\lambda \cos \left( \frac{u}{c}\right) \sin v),  \label{c4}
\end{equation}%
where%
\begin{equation}
\begin{array}{c}
f_{1}(u)=\int \sqrt{1-\frac{\lambda ^{2}}{c^{2}}\sin ^{2}\left( \frac{u}{c}%
\right) }\cos \alpha (u)du, \\ 
f_{2}(u)=\int \sqrt{1-\frac{\lambda ^{2}}{c^{2}}\sin ^{2}\left( \frac{u}{c}%
\right) }\sin \alpha (u)du.%
\end{array}
\label{c5}
\end{equation}%
are differentiable functions.

We call this surface the \textit{generalized spherical surface of first kind}%
. Actually, these surfaces are the special type of rotational surfaces \cite%
{GM1}, see also \cite{BABO1}.

The tangent space is spanned by the vector fields%
\begin{eqnarray*}
X_{u}(u,v) &=&(f_{1}{}^{\prime }(u),f_{2}{}^{\prime }(u),\frac{-\lambda }{c}%
\sin \left( \frac{u}{c}\right) \cos v,\frac{-\lambda }{c}\sin \left( \frac{u%
}{c}\right) \sin v), \\
X_{v}(u,v) &=&(0,0,-\lambda \cos \left( \frac{u}{c}\right) \sin v,\lambda
\cos \left( \frac{u}{c}\right) \cos (v)).
\end{eqnarray*}%
\ 

Hence, the coefficients of the first fundamental form of the surface are%
\begin{eqnarray*}
g_{11} &=&\text{ }<X_{u}(u,v),X_{u}(u,v)>\text{ }=1 \\
g_{12} &=&\text{ }<X_{u}(u,v),X_{v}(u,v)>\text{ }=0 \\
g_{22} &=&\text{ }<X_{v}(u,v),X_{v}(u,v)>\text{ }=\lambda ^{2}\cos
^{2}\left( \frac{u}{c}\right) ,
\end{eqnarray*}%
where $\left \langle ,\right \rangle $ is the standard scalar product in $%
\mathbb{E}^{4}$.

The second partial derivatives of $X(u,v)$ are expressed as follows%
\begin{eqnarray*}
X_{uu}(u,v) &=&(f_{1}{}^{\prime \prime }(u),f_{2}{}^{\prime \prime }(u),%
\frac{-\lambda }{c^{2}}\cos \left( \frac{u}{c}\right) \cos v,\frac{-\lambda 
}{c^{2}}\cos \left( \frac{u}{c}\right) \sin v), \\
X_{uv}(u,v) &=&(0,0,\frac{\lambda }{c}\sin \left( \frac{u}{c}\right) \sin v,-%
\frac{\lambda }{c}\sin \left( \frac{u}{c}\right) \cos (v)), \\
X_{vv}(u,v) &=&(0,0,-\lambda \cos \left( \frac{u}{c}\right) \cos v,-\lambda
\cos \left( \frac{u}{c}\right) \sin (v)).
\end{eqnarray*}%
The normal space is spanned by the vector fields%
\begin{eqnarray*}
N_{1} &=&\frac{1}{\kappa }(f_{1}{}^{\prime \prime }(u),f_{2}{}^{\prime
\prime }(u),\frac{-\lambda }{c^{2}}\cos \left( \frac{u}{c}\right) \cos v,%
\frac{-\lambda }{c^{2}}\cos \left( \frac{u}{c}\right) \sin v) \\
N_{2} &=&\frac{1}{\kappa }(\frac{\text{-}\lambda f_{2}{}^{\prime }(u)}{c^{2}}%
\cos \left( \frac{u}{c}\right) +\frac{\lambda f_{2}{}^{\prime \prime }(u)}{c}%
\sin \left( \frac{u}{c}\right) ,\frac{\text{-}\lambda f_{1}{}^{\prime \prime
}(u)}{c}\sin \left( \frac{u}{c}\right) +\frac{\lambda f_{1}{}^{\prime }(u)}{%
c^{2}}\cos \left( \frac{u}{c}\right) , \\
&&(f_{1}{}^{\prime }(u)f_{2}{}^{\prime \prime }(u)-f_{1}{}^{\prime \prime
}(u)f_{2}{}^{\prime }(u))\cos v,(f_{1}{}^{\prime }(u)f_{2}{}^{\prime \prime
}(u)-f_{1}{}^{\prime \prime }(u)f_{2}{}^{\prime }(u))\sin v)
\end{eqnarray*}%
where 
\begin{equation}
\kappa =\sqrt{(f_{1}{}^{\prime \prime })^{2}+(f_{2}{}^{\prime \prime })^{2}+%
\frac{\lambda ^{2}}{c^{4}}\cos ^{2}\left( \frac{u}{c}\right) },  \label{c10}
\end{equation}%
is the curvature of the profile curve $\gamma$. Hence, the coefficients of
the second fundamental form of the surface are%
\begin{eqnarray}
L_{11}^{1} &=&<X_{uu}(u,v),N_{1}(u,v)>=\kappa (u),  \notag \\
L_{12}^{1} &=&<X_{uv}(u,v),N_{1}(u,v)>=0,  \notag \\
L_{22}^{1} &=&<X_{vv}(u,v),N_{1}(u,v)>=\frac{\lambda ^{2}\cos ^{2}\left( 
\frac{u}{c}\right) }{c^{2}\kappa (u)},  \label{C11} \\
L_{11}^{2} &=&<X_{u}(u,v),N_{2}(u,v)>=0,  \notag \\
L_{12}^{2} &=&<X_{uv}(u,v),N_{2}(u,v)>=0,  \notag \\
L_{22}^{2} &=&<X_{vv}(u,v),N_{2}(u,v)>=-\frac{\lambda \cos \left( \frac{u}{c}%
\right) \kappa _{1}(u)}{\kappa (u)}.  \notag
\end{eqnarray}%
where 
\begin{equation}
\kappa _{1}(u)=f_{1}{}^{\prime }(u)f_{2}{}^{\prime \prime
}(u)-f_{1}{}^{\prime \prime }(u)f_{2}{}^{\prime }(u),  \label{C12}
\end{equation}%
is the differentiable function.

Furthermore, by the use of (\ref{C11}) with (\ref{A6})-(\ref{A7}) we obtain
the following results.

\begin{proposition}
The generalized spherical surface of first kind has constant Gaussian
curvature $K=1/c^{2}$.
\end{proposition}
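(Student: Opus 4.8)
The plan is to substitute the coefficients of the second fundamental form from (\ref{C11}) directly into the Gaussian curvature formula (\ref{A6}) and simplify. Since the surface sits in $\mathbb{E}^{4}$ we have $n+m=4$, so the sum in (\ref{A6}) runs over the two normal directions $N_{1}$ and $N_{2}$. First I would record the Weingarten-type data already computed: from the first fundamental form we have $W^{2}=g_{11}g_{22}-g_{12}^{2}=\lambda^{2}\cos^{2}\!\left(\frac{u}{c}\right)$, since $g_{11}=1$, $g_{12}=0$, and $g_{22}=\lambda^{2}\cos^{2}\!\left(\frac{u}{c}\right)$.

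Next I would assemble the two curvature contributions. For the normal $N_{1}$, equation (\ref{C11}) gives $L_{11}^{1}=\kappa(u)$, $L_{12}^{1}=0$, and $L_{22}^{1}=\frac{\lambda^{2}\cos^{2}\left(\frac{u}{c}\right)}{c^{2}\kappa(u)}$, so that the term $L_{11}^{1}L_{22}^{1}-(L_{12}^{1})^{2}$ collapses to $\frac{\lambda^{2}\cos^{2}\left(\frac{u}{c}\right)}{c^{2}}$, with the factors of $\kappa(u)$ cancelling cleanly. For the normal $N_{2}$, the key observation is that $L_{11}^{2}=0$ and $L_{12}^{2}=0$, which forces the entire second summand $L_{11}^{2}L_{22}^{2}-(L_{12}^{2})^{2}$ to vanish regardless of the value of $L_{22}^{2}$. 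Thus only the $N_{1}$ contribution survives.

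Combining these, the Gaussian curvature becomes
\begin{equation*}
K=\frac{1}{W^{2}}\left(\frac{\lambda^{2}\cos^{2}\left(\frac{u}{c}\right)}{c^{2}}+0\right)=\frac{1}{\lambda^{2}\cos^{2}\left(\frac{u}{c}\right)}\cdot\frac{\lambda^{2}\cos^{2}\left(\frac{u}{c}\right)}{c^{2}}=\frac{1}{c^{2}},
\end{equation*}
which is the desired constant. The calculation is genuinely short because the profile-curve curvature $\kappa(u)$ enters $L_{11}^{1}$ and $L_{22}^{1}$ reciprocally and so cancels in the product, while the vanishing of $L_{11}^{2}$ and $L_{12}^{2}$ removes the second normal entirely.

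I do not expect any serious obstacle here, since all the ingredients are already established in the excerpt; the proof is essentially a verification. The only point requiring mild care is confirming that the $\kappa(u)$ factors cancel exactly and that no hidden $u$-dependence remains after multiplying by $1/W^{2}$. Notably, the result holds for \emph{any} admissible choice of the angle function $\alpha(u)$ determining $f_{1}$ and $f_{2}$, since $\alpha$ never appears in the final expression — a fact worth emphasizing, as it shows the constancy $K=1/c^{2}$ is an intrinsic feature of the generalized spherical construction rather than of a special profile curve.
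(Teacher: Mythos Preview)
Your proof is correct and follows exactly the approach indicated in the paper, which simply states that the result follows by substituting the coefficients (\ref{C11}) into the Gaussian curvature formula (\ref{A6}). You have merely written out the short computation that the paper leaves implicit, and every step checks out.
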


\begin{proposition}
Let $M$ be a generalized spherical surface of first kind given with the
surface patch \textit{(\ref{c4})}. Then the mean curvature vector of $M$
becomes%
\begin{equation}
\overrightarrow{H}=\frac{1}{2}\left \{ \left( \frac{\kappa ^{2}c^{2}+1}{%
c^{2}\kappa }\right) N_{1}-\frac{\kappa _{1}}{\kappa \lambda \cos \left( 
\frac{u}{c}\right) }N_{2}\right \} .  \label{c15}
\end{equation}%
where%
\begin{equation}
\kappa =\sqrt{(\varphi \prime )^{2}+\varphi ^{2}\left( (\alpha ^{\prime
})^{2}+\frac{1}{c^{2}}\right) +\frac{\lambda ^{2}}{c^{4}}\left( 1-\frac{c^{2}%
}{\lambda ^{2}}\right) },\text{ }\kappa _{1}=\varphi ^{2}\alpha ^{\prime },
\label{c16}
\end{equation}%
and%
\begin{equation}
\varphi =\sqrt{1-\frac{\lambda ^{2}}{c^{2}}\sin ^{2}\left( \frac{u}{c}%
\right) }.  \label{c17}
\end{equation}
\end{proposition}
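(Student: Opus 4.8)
The plan is to obtain \eqref{c15} by direct substitution of the already-computed fundamental-form data into the general mean-curvature formula \eqref{A7}, and then to reduce the curvature functions $\kappa$ and $\kappa_1$ to the compact trigonometric form \eqref{c16} by exploiting that the profile is encoded through the single angle function $\alpha(u)$ and the amplitude $\varphi(u)$. So the argument splits into two independent pieces: first establishing the shape of $\overrightarrow{H}$, then simplifying the scalars that appear in it.

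First I would record the geometric input assembled above: the first fundamental form has $g_{11}=1$, $g_{12}=0$, $g_{22}=\lambda^2\cos^2(u/c)$, so that $W^2=\lambda^2\cos^2(u/c)$, while the second fundamental form coefficients in the frame $\{N_1,N_2\}$ are those listed in \eqref{C11}. Since $g_{12}=0$, formula \eqref{A7} collapses to $\overrightarrow{H}=\tfrac{1}{2W^2}\{(L_{11}^1 g_{22}+L_{22}^1 g_{11})N_1+(L_{11}^2 g_{22}+L_{22}^2 g_{11})N_2\}$. For the $N_1$ part I would insert $L_{11}^1=\kappa$ and $L_{22}^1=\lambda^2\cos^2(u/c)/(c^2\kappa)$, factor out $\lambda^2\cos^2(u/c)$, and cancel it against $W^2$; this yields the coefficient $\tfrac{1}{2}\,(c^2\kappa^2+1)/(c^2\kappa)$. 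For the $N_2$ part only the term $L_{22}^2 g_{11}=-\lambda\cos(u/c)\kappa_1/\kappa$ survives because $L_{11}^2=0$, and dividing by $2W^2$ produces $-\tfrac{1}{2}\,\kappa_1/(\kappa\lambda\cos(u/c))$. Assembling the two pieces gives exactly \eqref{c15}.

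It then remains to verify \eqref{c16}. Differentiating \eqref{c5} gives $f_1'=\varphi\cos\alpha$ and $f_2'=\varphi\sin\alpha$, hence $f_1''=\varphi'\cos\alpha-\varphi\alpha'\sin\alpha$ and $f_2''=\varphi'\sin\alpha+\varphi\alpha'\cos\alpha$. Inserting these into the Wronskian-type expression \eqref{C12} makes the $\varphi\varphi'$ cross terms cancel and leaves $\kappa_1=f_1'f_2''-f_1''f_2'=\varphi^2\alpha'$. For $\kappa$ the same substitution yields $(f_1'')^2+(f_2'')^2=(\varphi')^2+\varphi^2(\alpha')^2$ once the mixed terms cancel, so that by \eqref{c10} one has $\kappa^2=(\varphi')^2+\varphi^2(\alpha')^2+\tfrac{\lambda^2}{c^4}\cos^2(u/c)$. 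The closing step is to rewrite the last summand using $\varphi^2=1-\tfrac{\lambda^2}{c^2}\sin^2(u/c)$: solving for $\sin^2(u/c)$ and using $\cos^2(u/c)=1-\sin^2(u/c)$ converts $\tfrac{\lambda^2}{c^4}\cos^2(u/c)$ into $\tfrac{\varphi^2}{c^2}+\tfrac{\lambda^2}{c^4}(1-\tfrac{c^2}{\lambda^2})$, and regrouping the $\varphi^2/c^2$ term with $\varphi^2(\alpha')^2$ produces precisely the radicand in \eqref{c16}.

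The computational core — the substitution into \eqref{A7} and the cancellation of the cross terms in $\kappa_1$ and in $(f_1'')^2+(f_2'')^2$ — is entirely routine. The one step that demands care is the trigonometric bookkeeping in the last sentence: one must eliminate $\sin^2(u/c)$ and $\cos^2(u/c)$ in favour of $\varphi$ so that the constant piece $\tfrac{\lambda^2}{c^4}(1-\tfrac{c^2}{\lambda^2})$ separates cleanly from the $\varphi$-dependent terms. This is where a stray factor of $c^2/\lambda^2$ could most easily slip in, so I would double-check it by re-expanding the claimed radicand in \eqref{c16} and confirming that it reproduces $(f_1'')^2+(f_2'')^2+\tfrac{\lambda^2}{c^4}\cos^2(u/c)$.
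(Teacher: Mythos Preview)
Your proposal is correct and follows exactly the route the paper indicates: the paper simply says ``by the use of \eqref{C11} with \eqref{A6}--\eqref{A7} we obtain the following results,'' and your argument is precisely that substitution, together with the elementary rewriting of $\kappa$ and $\kappa_1$ via $f_1'=\varphi\cos\alpha$, $f_2'=\varphi\sin\alpha$ that the paper leaves implicit.
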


\begin{corollary}
Let $M$ be a generalized spherical surface of first kind given with the
surface patch \textit{(\ref{c4})}. If \ the second mean curvature $H_{2}$
vanishes identically then the angle function $\alpha (u)$ is a real constant.
\end{corollary}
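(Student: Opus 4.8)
The plan is to compute the second mean curvature function $H_2$ directly from the defining formula (\ref{A8}) with $k=2$, feeding in the second fundamental form coefficients already recorded in (\ref{C11}) together with the first fundamental form coefficients. For the patch (\ref{c4}) one has $g_{11}=1$, $g_{12}=0$, $g_{22}=\lambda^{2}\cos^{2}\left(\frac{u}{c}\right)$, so that $W^{2}=g_{11}g_{22}-g_{12}^{2}=\lambda^{2}\cos^{2}\left(\frac{u}{c}\right)$. Substituting $L_{11}^{2}=0$, $L_{12}^{2}=0$, and $L_{22}^{2}=-\frac{\lambda\cos\left(\frac{u}{c}\right)\kappa_{1}(u)}{\kappa(u)}$ gives
\begin{equation*}
H_{2}=\frac{L_{11}^{2}g_{22}+L_{22}^{2}g_{11}-2L_{12}^{2}g_{12}}{2W^{2}}=\frac{L_{22}^{2}}{2\lambda^{2}\cos^{2}\left(\frac{u}{c}\right)}=-\frac{\kappa_{1}(u)}{2\lambda\cos\left(\frac{u}{c}\right)\kappa(u)}.
\end{equation*}

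Next I would observe that on the domain of a regular patch the quantities $\lambda$, $\cos\left(\frac{u}{c}\right)$, and $\kappa(u)$ are all nonzero: indeed $\kappa>0$ is the curvature of the profile curve $\gamma$ by (\ref{c10}), while $\lambda\cos\left(\frac{u}{c}\right)\neq 0$ is precisely the regularity condition $\Vert X_{u}\times X_{v}\Vert\neq 0$. Hence the hypothesis $H_{2}\equiv 0$ is equivalent to $\kappa_{1}(u)\equiv 0$.

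Finally I would connect $\kappa_{1}$ to the angle function. Differentiating (\ref{c5}) gives $f_{1}{}^{\prime}=\varphi\cos\alpha$ and $f_{2}{}^{\prime}=\varphi\sin\alpha$ with $\varphi$ as in (\ref{c17}); a second differentiation and insertion into (\ref{C12}) produces, after the cancellation using $\cos^{2}\alpha+\sin^{2}\alpha=1$, the identity $\kappa_{1}=\varphi^{2}\alpha^{\prime}$ already listed in (\ref{c16}). Since $\varphi^{2}=1-\frac{\lambda^{2}}{c^{2}}\sin^{2}\left(\frac{u}{c}\right)=\Vert\phi^{\prime}(u)\Vert^{2}>0$ for a regular generating curve by (\ref{B7}), the vanishing $\kappa_{1}\equiv 0$ forces $\alpha^{\prime}(u)\equiv 0$, so that $\alpha(u)$ is a real constant, as claimed.

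I expect no serious obstacle in this argument; the only point demanding a little care is justifying that the scalar factors $\cos\left(\frac{u}{c}\right)$ and $\varphi$ do not vanish identically, so that the vanishing of the product genuinely transfers to $\alpha^{\prime}$. This is guaranteed by the regularity assumptions built into the patch (\ref{c4}) and the condition (\ref{B7}), and away from the isolated zeros of $\cos\left(\frac{u}{c}\right)$ one concludes $\alpha^{\prime}=0$ everywhere by continuity.
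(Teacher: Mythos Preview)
Your argument is correct and follows exactly the line implied by the paper: the corollary is an immediate consequence of the mean curvature formula \textit{(\ref{c15})}--\textit{(\ref{c16})}, since the $N_{2}$-component of $\overrightarrow{H}$ is $H_{2}=-\dfrac{\kappa_{1}}{2\kappa\lambda\cos(u/c)}=-\dfrac{\varphi^{2}\alpha^{\prime}}{2\kappa\lambda\cos(u/c)}$, and the nonvanishing of $\kappa$, $\lambda\cos(u/c)$, and $\varphi^{2}$ forces $\alpha^{\prime}\equiv 0$. Your added remarks on why these scalar factors are nonzero make explicit what the paper leaves tacit.
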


For any local surface $M\subset \mathbb{E}^{4}$ given with the regular
surface patch $X(u,v)$ the normal curvature $K_{N}$ is given with the
following result.

\begin{proposition}
\cite{Be} Let $M\subset \mathbb{E}^{4}$ be a local surface given with a
regular patch $X(u,v)$ then the normal curvature $K_{N}$ of the surface
becomes%
\begin{equation}
K_{N}=\frac{%
g_{11}(L_{12}^{1}L_{22}^{2}-L_{12}^{2}L_{22}^{1})-g_{12}(L_{11}^{1}L_{22}^{2}-L_{11}^{2}L_{22}^{1})+g_{22}(L_{11}^{1}L_{12}^{2}-L_{11}^{2}L_{12}^{1})%
}{W^{3}}.  \label{C18}
\end{equation}
\end{proposition}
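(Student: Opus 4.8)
The plan is to begin from the definition (\ref{A9}) of the normal curvature and specialize it to $\mathbb{E}^{4}$, i.e. $n=4$, where the normal bundle of $M$ is spanned by exactly two fields $N_{1},N_{2}$. Then the index range $1=\alpha<\beta\leq n-2=2$ collapses to the single pair $(\alpha,\beta)=(1,2)$, so that
\[
K_{N}=\left|\left\langle R^{\perp}(X_{1},X_{2})N_{1},N_{2}\right\rangle\right|,
\]
where $\{X_{1},X_{2}\}$ is read as an \emph{orthonormal} tangent frame. Applying the Ricci equation (\ref{A11}) rewrites this purely in terms of the two shape operators,
\[
K_{N}=\left|\left\langle [A_{N_{1}},A_{N_{2}}]X_{1},X_{2}\right\rangle\right|,
\]
so the whole problem reduces to computing one commutator of self-adjoint operators on a $2$-dimensional space.

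Next I would pass from the orthonormal frame to the coordinate frame $\{X_{u},X_{v}\}$ in which the data $g_{ij}$ and $L_{ij}^{k}$ are expressed. The quantity $\langle R^{\perp}(\cdot,\cdot)N_{1},N_{2}\rangle$ is a $2$-form in its tangent slots, and evaluating it on $X_{u},X_{v}$ multiplies its value on the orthonormal frame by the area factor $W$; hence $K_{N}=\tfrac{1}{W}\,|\langle[A_{N_{1}},A_{N_{2}}]X_{u},X_{v}\rangle|$. To evaluate the commutator I would use (\ref{A4}): writing $G=(g_{ij})$ and $L^{k}=(L^{k}_{ij})$ for the (symmetric) fundamental-form matrices, self-adjointness says the matrix of $A_{N_{k}}$ in the basis $\{X_{u},X_{v}\}$ is $S_{k}=G^{-1}L^{k}$. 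A short computation then identifies $\langle[A_{N_{1}},A_{N_{2}}]X_{u},X_{v}\rangle$ with the $(2,1)$-entry of $G[S_{1},S_{2}]=L^{1}G^{-1}L^{2}-L^{2}G^{-1}L^{1}$.

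The final step is bookkeeping: substituting $G^{-1}=\tfrac{1}{W^{2}}\,\mathrm{adj}(G)$ and expanding the $(2,1)$-entry, one collects terms by their coefficient of $g_{11}$, $g_{12}$, $g_{22}$ and recognizes the three $2\times2$ minors $L_{12}^{1}L_{22}^{2}-L_{12}^{2}L_{22}^{1}$, $L_{11}^{1}L_{22}^{2}-L_{11}^{2}L_{22}^{1}$ and $L_{11}^{1}L_{12}^{2}-L_{11}^{2}L_{12}^{1}$ appearing in (\ref{C18}). This yields $\pm$ the stated numerator divided by $W^{2}$, and combined with the earlier $1/W$ it produces exactly the $W^{3}$ in the denominator. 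I expect the main obstacle to be conceptual rather than computational: one must correctly account for the extra area factor $W$ from the orthonormal-to-coordinate change (this is precisely what turns $W^{2}$ into $W^{3}$), whereas the overall sign coming from the commutator and from the ordering of $X_{u},X_{v}$ is harmless, being absorbed by the absolute value built into (\ref{A9}).
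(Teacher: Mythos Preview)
The paper does not supply its own proof of this proposition; it is quoted verbatim from the thesis \cite{Be} and used as a black box. Your derivation is the standard one and is correct: in $\mathbb{E}^{4}$ the sum in (\ref{A9}) collapses to the single term $|\langle R^{\perp}(X_{1},X_{2})N_{1},N_{2}\rangle|$, the Ricci equation (\ref{A11}) converts it to a commutator of shape operators, and your passage from the orthonormal frame to the coordinate frame $\{X_{u},X_{v}\}$ via the area factor $W$ is exactly what produces the cube in the denominator (one $W$ from the frame change, one $W^{2}$ from $G^{-1}=\tfrac{1}{W^{2}}\mathrm{adj}(G)$). The identification of $\langle[A_{N_{1}},A_{N_{2}}]X_{u},X_{v}\rangle$ with the $(2,1)$-entry of $L^{1}G^{-1}L^{2}-L^{2}G^{-1}L^{1}$ is clean and correct.

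One minor remark: formula (\ref{C18}) is stated in the paper without an absolute value, whereas (\ref{A9}) forces $K_{N}\geq 0$. Your computation therefore really yields $\pm$ the right-hand side of (\ref{C18}); the sign depends on the orientation of $\{X_{u},X_{v}\}$ and of $\{N_{1},N_{2}\}$. You already flagged this, and for the applications in the paper (where (\ref{C18}) is only used to conclude $K_{N}=0$) it is harmless.
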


As a consequence of \textit{(\ref{C11})} with \textit{(\ref{C18})} we get
the following result.

\begin{corollary}
Any generalized spherical surface of first kind has flat normal connection,
i.e., $K_{N}=0$.
\end{corollary}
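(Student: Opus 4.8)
The plan is to substitute the coefficients of the second fundamental form computed in \textit{(\ref{C11})} directly into the normal curvature formula \textit{(\ref{C18})} and verify that the numerator vanishes identically. The crucial structural feature of the first kind surface is that, among the six coefficients listed in \textit{(\ref{C11})}, three of them are zero: namely $L_{12}^{1}=0$, $L_{11}^{2}=0$ and $L_{12}^{2}=0$. Moreover, since the parametrization is orthogonal we also have $g_{12}=0$ from the first fundamental form.

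First I would record the three bracketed expressions appearing in the numerator of \textit{(\ref{C18})}. The first bracket $L_{12}^{1}L_{22}^{2}-L_{12}^{2}L_{22}^{1}$ vanishes because both $L_{12}^{1}$ and $L_{12}^{2}$ are zero. The third bracket $L_{11}^{1}L_{12}^{2}-L_{11}^{2}L_{12}^{1}$ vanishes for the same reason. The middle bracket $L_{11}^{1}L_{22}^{2}-L_{11}^{2}L_{22}^{1}$ need not vanish on its own, but it is multiplied by $g_{12}=0$, so that term drops out as well. Consequently every summand in the numerator is zero, whence $K_{N}=0$ irrespective of the value of $W$.

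Alternatively, and more conceptually, one could argue through the Ricci equation \textit{(\ref{A11})} together with Cartan's criterion quoted at the end of Section 2: the normal connection is flat precisely when the shape operators $A_{N_{1}}$ and $A_{N_{2}}$ are simultaneously diagonalizable. Since $g_{12}=0$ makes $\{X_{u},X_{v}\}$ an orthogonal frame and $L_{12}^{1}=L_{12}^{2}=0$, both shape operators are already diagonal with respect to this frame; hence they commute, $[A_{N_{1}},A_{N_{2}}]=0$, and the same conclusion follows.

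There is essentially no analytic obstacle here, as the result is a purely algebraic consequence of the vanishing of the off-diagonal data. The only point meriting mild care is to confirm that the normal frame $\{N_{1},N_{2}\}$ of Case II is genuinely orthonormal, so that formula \textit{(\ref{C18})} applies as stated; this is guaranteed by the construction of $N_{1}$ and $N_{2}$ and the normalization by $\kappa$ defined in \textit{(\ref{c10})}.
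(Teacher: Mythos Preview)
Your proof is correct and follows exactly the route indicated in the paper, which simply remarks that the corollary is a consequence of substituting the coefficients \textit{(\ref{C11})} into the formula \textit{(\ref{C18})}; you have merely written out that substitution explicitly. The additional argument via Cartan's criterion is a nice supplementary observation not present in the paper, but the primary computation matches.
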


\begin{example}
In 1966, T. Otsuki considered the following special cases 
\begin{eqnarray*}
a)\text{ \ }f_{1}(u) &=&\frac{4}{3}\cos ^{3}(\frac{u}{2}),\text{ }f_{2}(u)=%
\frac{4}{3}\sin ^{3}(\frac{u}{2}),\text{ }f_{3}(u)=\sin u, \\
b)\text{ \ }f_{1}(u) &=&\frac{1}{2}\sin ^{2}u\cos (2u),\text{ }f_{2}(u)=%
\frac{1}{2}\sin ^{2}u\sin (2u),\text{ }f_{3}(u)=\sin u.
\end{eqnarray*}%
For the case a) the surface is called Otsuki (non-round) sphere in $\mathbb{E%
}^{4}$ which does not lie in a 3-dimensional subspace of $\mathbb{E}^{4}$.
It has been shown that these surfaces have constant Gaussian curvature \cite%
{Ot}.
\end{example}

\textbf{CASE }$\mathbf{III}$\textbf{. } For $n=1$ and $m=3$, the radius
vector \textit{(\ref{C1})} satisfying the indicated properties describes the 
\textit{generalized spherical surface }given with the radius vector%
\begin{equation}
X(u,v)=\phi (u)\overrightarrow{e_{1}}+\lambda \cos \left( \frac{u}{c}\right)
\rho (v),  \label{C19}
\end{equation}%
where%
\begin{equation}
\phi (u)=\int \sqrt{1-\frac{\lambda ^{2}}{c^{2}}\sin ^{2}\left( \frac{u}{c}%
\right) }du,  \label{C20}
\end{equation}%
and $\rho =\rho (v)$ parametrized by%
\begin{equation*}
\rho (v)=(g_{1}(v),g_{2}(v),g_{3}(v)),
\end{equation*}%
\begin{equation*}
\left \Vert \rho (v)\right \Vert =1,\left \Vert \rho ^{\prime }(v)\right
\Vert =1,
\end{equation*}%
which lies on the unit sphere $S^{2}\subset $ $\mathbb{E}^{4}.$ The
spherical curve $\rho $ has the following Frenet Frames;%
\begin{eqnarray*}
\rho ^{\prime }(v) &=&T(v) \\
T^{\prime }(v) &=&\kappa _{\rho }(v)N(v)-\rho (v) \\
N^{\prime }(v) &=&-\kappa _{\rho }(v)T(v).
\end{eqnarray*}

We call this surface a \textit{generalized spherical surface of second kind}%
. Actually, these surfaces are the special type of meridian surface defined
in \cite{GM2}, see also \cite{ABM}.

\begin{proposition}
Let $M$ be a meridian surface in $\mathbb{E}^{4}$ given with the
parametrization \textit{(\ref{C19})}. Then $M$ has the Gaussian curvature 
\begin{equation}
K=-\frac{\kappa _{\gamma }\phi ^{\prime }(u)}{\lambda \cos \left( \frac{u}{c}%
\right) {}},  \label{C21}
\end{equation}%
where%
\begin{equation*}
\kappa _{\gamma }(u)=-\frac{\lambda }{c^{2}}\phi ^{\prime }(u)\cos \left( 
\frac{u}{c}\right) +\frac{\lambda }{c}\phi ^{\prime \prime }(u)\sin \left( 
\frac{u}{c}\right) .
\end{equation*}
\end{proposition}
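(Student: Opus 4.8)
The plan is to compute the Gaussian curvature directly from the intrinsic formula (\ref{A6}), which for a surface in $\mathbb{E}^4$ reads $K = \frac{1}{W^2}\sum_{k=1}^{2}(L_{11}^k L_{22}^k - (L_{12}^k)^2)$. Thus I need the first fundamental form (to obtain $W^2$), an orthonormal normal frame $\{N_1,N_2\}$, and the associated second fundamental form coefficients $L_{ij}^k$. The entire computation will hinge on the observation that, since $\rho$ is a unit-speed curve on the unit sphere $S^2$, the triple $\{\rho(v),\, T(v)=\rho^{\prime}(v),\, N(v)\}$ is an orthonormal moving frame of $\mathbb{E}^3$; adjoining $\overrightarrow{e_1}$ yields an orthonormal frame of $\mathbb{E}^4$ in which every vector below decouples cleanly.

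First I would differentiate (\ref{C19}) to get $X_u = \phi^{\prime}\overrightarrow{e_1} - \frac{\lambda}{c}\sin\left(\frac{u}{c}\right)\rho$ and $X_v = \lambda\cos\left(\frac{u}{c}\right)T$. Using the defining relation (\ref{C20}), equivalently (\ref{B7}), namely $(\phi^{\prime})^2 = 1 - \frac{\lambda^2}{c^2}\sin^2\left(\frac{u}{c}\right)$, the first fundamental form collapses to $g_{11}=1$, $g_{12}=0$ (from $\langle\rho,T\rangle=0$), and $g_{22}=\lambda^2\cos^2\left(\frac{u}{c}\right)$ (from $\|T\|=1$), whence $W^2 = \lambda^2\cos^2\left(\frac{u}{c}\right)$.

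Next I would produce the normal frame. Since $X_v$ is a multiple of $T$ and $X_u$ lies in $\operatorname{span}\{\overrightarrow{e_1},\rho\}$, the Frenet normal $N$ of $\rho$ is orthogonal to both, so I set $N_2 = N$. The remaining normal $N_1$ must lie in $\operatorname{span}\{\overrightarrow{e_1},\rho\}$ and be orthogonal to $X_u$; one checks that $N_1 = \frac{\lambda}{c}\sin\left(\frac{u}{c}\right)\overrightarrow{e_1} + \phi^{\prime}\rho$ is already a unit vector, since its squared norm is again $(\phi^{\prime})^2 + \frac{\lambda^2}{c^2}\sin^2\left(\frac{u}{c}\right) = 1$. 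Differentiating once more and invoking the Frenet relation $\rho^{\prime\prime} = T^{\prime} = \kappa_\rho N - \rho$, I would form $X_{uu}$, $X_{uv}$, $X_{vv}$ and pair them against $N_1,N_2$. The decisive outputs are $L_{11}^1 = \langle X_{uu},N_1\rangle = \kappa_\gamma$ (exactly the function in the statement), $L_{22}^1 = -\lambda\cos\left(\frac{u}{c}\right)\phi^{\prime}$, and $L_{22}^2 = \lambda\cos\left(\frac{u}{c}\right)\kappa_\rho$, while $L_{12}^1 = L_{12}^2 = L_{11}^2 = 0$ because the surviving mixed terms only involve $T$, which is orthogonal to both normals.

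Substituting into (\ref{A6}) then gives $K = \frac{1}{\lambda^2\cos^2\left(\frac{u}{c}\right)}\big(\kappa_\gamma\cdot(-\lambda\cos\left(\frac{u}{c}\right)\phi^{\prime}) + 0\big) = -\frac{\kappa_\gamma\phi^{\prime}(u)}{\lambda\cos\left(\frac{u}{c}\right)}$, which is (\ref{C21}). I do not expect a genuine obstacle here, as the argument is a controlled computation; the only step requiring real care is the correct identification and normalization of $N_1$. In particular, one must notice that the same Pythagorean identity $(\phi^{\prime})^2 + \frac{\lambda^2}{c^2}\sin^2\left(\frac{u}{c}\right)=1$ that trivializes $g_{11}$ also makes $N_1$ automatically unit, and must recognize that the $\overrightarrow{e_1}$- and $\rho$-components of $X_{uu}$ recombine into precisely $\kappa_\gamma$. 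The $N$-direction contributes nothing to $K$ because $L_{11}^2 = L_{12}^2 = 0$, so only the $N_1$-block survives.
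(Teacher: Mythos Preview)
Your proof is correct and follows essentially the same approach as the paper: compute the tangent frame, first fundamental form, an orthonormal normal frame, the coefficients $L_{ij}^k$, and substitute into (\ref{A6}). The only cosmetic difference is that the paper labels the Frenet normal $N(v)$ as $N_1$ and the vector in $\operatorname{span}\{\overrightarrow{e_1},\rho\}$ as $N_2$ (with the opposite sign of your $N_1$), so its nonzero coefficients are $L_{11}^2=-\kappa_\gamma$, $L_{22}^2=\phi'\lambda\cos(u/c)$, $L_{22}^1=\kappa_\rho\lambda\cos(u/c)$; the resulting $K$ is of course the same.
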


\begin{proof}
Let $M$ be a meridian surface in $\mathbb{E}^{4}$ defined by \textit{(\ref%
{C19})}. Differentiating \textit{(\ref{C19})} with respect to $u$ and $v$
and we obtain

\begin{eqnarray}
X_{u} &=&\phi ^{\prime }(u)\overrightarrow{e}_{1}-\frac{\lambda }{c}\sin
\left( \frac{u}{c}\right) \rho (v),\text{ \ }  \notag \\
X_{v} &=&\lambda \cos \left( \frac{u}{c}\right) \rho ^{\prime }(v),\text{ } 
\notag \\
X_{uu} &=&\phi ^{\prime \prime }(u)\overrightarrow{e}_{1}-\frac{\lambda }{%
c^{2}}\cos \left( \frac{u}{c}\right) \rho (v),\text{ \ }  \label{C22} \\
X_{uv} &=&-\frac{\lambda }{c}\sin \left( \frac{u}{c}\right) \rho ^{\prime
}(v),\text{ \ }  \notag \\
X_{vv} &=&\lambda \cos \left( \frac{u}{c}\right) \rho ^{\prime \prime }(v).%
\text{ \ }  \notag
\end{eqnarray}

The normal space of $M$ is spanned by

\begin{eqnarray}
N_{1} &=&N(v),  \label{C23} \\
N_{2} &=&-\frac{\lambda }{c}\sin \left( \frac{u}{c}\right) \overrightarrow{e}%
_{1}-\phi ^{\prime }(u)\rho (v),  \notag
\end{eqnarray}%
where $N(v)$ is the normal vector of the spherical curve $\rho$.

Hence, the coefficients of first and second fundamental forms are becomes%
\begin{eqnarray}
g_{11} &=&\text{ }<X_{u}(u,u),X_{u}(u,u)>\text{ }=1,\text{ }  \notag \\
g_{12} &=&\text{ }<X_{u}(u,v),X_{v}(u,v)>\text{ }=0,\text{ }  \label{C24} \\
g_{22} &=&\text{ }<X_{v}(v,v),X_{v}(v,v)>\text{ }=\lambda ^{2}\cos
^{2}\left( \frac{u}{c}\right) ,  \notag
\end{eqnarray}%
and%
\begin{eqnarray}
L_{11}^{1} &=&L_{12}^{1}=L_{12}^{2}=0,\text{ \ }  \notag \\
L_{22}^{1} &=&\kappa _{\rho }(v)\lambda \cos \left( \frac{u}{c}\right) ,%
\text{ }  \label{C25} \\
L_{11}^{2} &=&-\kappa _{\gamma }(u),\text{ \ }  \notag \\
L_{11}^{2} &=&\phi ^{\prime }(u)\lambda \cos \left( \frac{u}{c}\right) .%
\text{ \  \ }  \notag
\end{eqnarray}%
respectively, where%
\begin{eqnarray*}
\kappa _{\gamma }(u) &=&f_{1}^{\prime }(u)f_{2}^{\prime \prime
}(u)-f_{1}^{\prime \prime }(u)f_{2}^{\prime }(u) \\
&=&-\frac{\lambda }{c^{2}}\phi ^{\prime }(u)\cos \left( \frac{u}{c}\right) +%
\frac{\lambda }{c}\phi ^{\prime \prime }(u)\sin \left( \frac{u}{c}\right) .
\end{eqnarray*}

Consequently, substituting \textit{(\ref{C24})}-\textit{(\ref{C25})} into 
\textit{(\ref{A6})} we obtain the result.
\end{proof}

As a consequence of \textit{(\ref{C25})} with \textit{(\ref{C18})} we get
the following result.

\begin{proposition}
Any generalized spherical surface of second kind has flat normal connection,
i.e., $K_{N}=0$.
\end{proposition}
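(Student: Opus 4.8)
The plan is to substitute the coefficients of the first and second fundamental forms already computed for the generalized spherical surface of second kind --- those recorded in \textit{(\ref{C24})} and \textit{(\ref{C25})} --- directly into the explicit formula \textit{(\ref{C18})} for the normal curvature $K_N$ established in the immediately preceding proposition. Because the surface is regular, the denominator $W^3$ is nonzero (here $W^2 = g_{11}g_{22} - g_{12}^2 = \lambda^2 \cos^2(u/c)$), so it suffices to prove that the numerator of \textit{(\ref{C18})} vanishes identically.

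The structural features I would exploit are that the parametrization \textit{(\ref{C19})} gives an orthogonal net and a second fundamental form with many zero entries: from \textit{(\ref{C24})} we have $g_{12} = 0$, while from \textit{(\ref{C25})} the only surviving coefficients are $L_{22}^1$, $L_{11}^2$ and $L_{22}^2$, with $L_{11}^1 = L_{12}^1 = L_{12}^2 = 0$. I would then inspect the three bracketed expressions in the numerator of \textit{(\ref{C18})} in turn. The first bracket $L_{12}^1 L_{22}^2 - L_{12}^2 L_{22}^1$ vanishes since $L_{12}^1 = L_{12}^2 = 0$; the middle term carries the factor $g_{12} = 0$ and so drops out; and the third bracket $L_{11}^1 L_{12}^2 - L_{11}^2 L_{12}^1$ vanishes since $L_{11}^1 = L_{12}^1 = 0$. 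Thus every summand in the numerator is zero, whence $K_N = 0$.

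I do not anticipate any real obstacle: the computation is a direct substitution rather than an estimate or a limiting argument. The conceptual content, which also furnishes an independent route, is that the two shape operators $A_{N_1}$ and $A_{N_2}$ are simultaneously represented by diagonal matrices in the orthogonal tangent frame $\{X_u, X_v\}$ --- precisely because the off-diagonal entries $L_{12}^1$ and $L_{12}^2$ vanish --- so they commute, and by the criterion of Cartan recalled in Section 2 the normal connection is then flat. The one point needing care is purely bookkeeping: in \textit{(\ref{C25})} the coefficient written last should be read as $L_{22}^2 = \phi'(u)\lambda\cos(u/c)$, and it is this value (together with $L_{11}^2 = -\kappa_\gamma(u)$ and $L_{22}^1 = \kappa_\rho(v)\lambda\cos(u/c)$) that enters the surviving brackets.
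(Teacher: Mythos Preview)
Your proposal is correct and follows exactly the route the paper indicates: the paper simply states that the result is ``a consequence of \textit{(\ref{C25})} with \textit{(\ref{C18})}'', and you have carried out precisely that substitution, verifying term by term that the numerator of \textit{(\ref{C18})} vanishes. Your additional remark about the simultaneous diagonalizability of the shape operators and Cartan's criterion is a nice conceptual gloss but is not needed, and is not in the paper.
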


\begin{corollary}
Every generalized spherical surface of second kind is a meridian surface
given with the parametrization 
\begin{equation}
\begin{array}{c}
f_{1}(u)=\int \sqrt{1-\frac{\lambda ^{2}}{c^{2}}\sin ^{2}\left( \frac{u}{c}%
\right) }du, \\ 
\text{ \  \  \  \  \  \  \  \  \ }f_{2}(u)=\lambda \cos \left( \frac{u}{c}\right) .%
\text{ \  \  \  \  \  \  \  \  \  \  \  \  \  \  \  \  \  \  \  \  \  \  \  \  \  \  \  \  \ }%
\end{array}
\label{C26}
\end{equation}
\end{corollary}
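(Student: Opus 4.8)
The plan is to match the defining data of the surface \textit{(\ref{C19})} against the standard description of a meridian surface and then simply read off the meridian (profile) curve. Recall from \cite{GM2}, \cite{ABM} that a meridian surface in $\mathbb{E}^{4}$ is built from a unit-speed spherical curve $\rho(v)$ on $S^{2}\subset \mathbb{E}^{3}=\mathrm{span}\{e_{2},e_{3},e_{4}\}$ together with a plane curve $\gamma(u)=(f_{1}(u),f_{2}(u))$ lying in the half-plane $\mathrm{span}\{e_{1},\rho\}$, via $X(u,v)=f_{1}(u)\,\overrightarrow{e}_{1}+f_{2}(u)\,\rho(v)$. First I would place the parametrization \textit{(\ref{C19})} in exactly this form: writing $X(u,v)=\phi(u)\,\overrightarrow{e}_{1}+\lambda\cos(u/c)\,\rho(v)$ and comparing coefficients identifies the axial component $f_{1}(u)=\phi(u)$ and the radial component $f_{2}(u)=\lambda\cos(u/c)$. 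This exhibits the surface as a meridian surface whose meridian curve is $\gamma(u)=(f_{1}(u),f_{2}(u))$.

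Next I would substitute the explicit expression \textit{(\ref{C20})} for $\phi$, namely $\phi(u)=\int\sqrt{1-\frac{\lambda^{2}}{c^{2}}\sin^{2}(u/c)}\,du$, to obtain the first equation of \textit{(\ref{C26})}, while the second equation is just the radial coefficient already read off. To confirm that $\gamma$ is genuinely the generalized spherical curve of Section 3 and is correctly parametrized, I would check the arc-length condition: differentiating gives $f_{1}'(u)=\sqrt{1-\frac{\lambda^{2}}{c^{2}}\sin^{2}(u/c)}$ and $f_{2}'(u)=-\frac{\lambda}{c}\sin(u/c)$, so that $(f_{1}')^{2}+(f_{2}')^{2}=1$, i.e. $\gamma$ is a unit-speed plane curve. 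This is precisely the condition \textit{(\ref{B7})} specialized to $n=1$, confirming that $\gamma$ coincides with the generalized spherical curve \textit{(\ref{C2})} used to generate the surface.

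The verification is essentially a bookkeeping exercise, so the only real point to get right is the identification of the two geometric ingredients---the rotating spherical curve $\rho$ and the fixed meridian curve $\gamma$---with the corresponding pieces in the Ganchev--Milousheva construction of meridian surfaces. Once the coefficient matching and the unit-speed check are in place, \textit{(\ref{C26})} follows immediately, giving the meridian curve of the surface in closed form.
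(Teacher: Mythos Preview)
Your proposal is correct and follows exactly the approach the paper has in mind. The paper does not give an explicit proof of this corollary; the identification $f_{1}=\phi$, $f_{2}=\lambda\cos(u/c)$ is already used implicitly in the proof of Proposition~12 (see the computation of $\kappa_{\gamma}=f_{1}'f_{2}''-f_{1}''f_{2}'$ there), and your coefficient-matching against the Ganchev--Milousheva meridian-surface form together with the unit-speed check simply makes this explicit.
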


By the use of \  \textit{(\ref{C20})-(\ref{C21}) with (\ref{C26}) we get the
following result.}

\begin{corollary}
The generalized spherical surface of second kind has constant Gaussian
curvature $K=1/c^{2}$.
\end{corollary}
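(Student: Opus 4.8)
The plan is to specialize the Gaussian curvature formula (\ref{C21}), already established in the Proposition for a general meridian surface, to the explicit profile curve (\ref{C26}) supplied by the preceding Corollary. Since the second-kind surface is precisely the meridian surface whose profile is $f_{1}(u)=\phi(u)=\int\varphi\,du$ and $f_{2}(u)=\lambda\cos(u/c)$, I would take $\phi^{\prime}(u)=\varphi$, where $\varphi$ is the function (\ref{c17}), and carry this through both $\kappa_{\gamma}$ and $K$.

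First I would differentiate $\varphi$ to obtain
\[
\phi^{\prime\prime}(u)=\frac{-\frac{\lambda^{2}}{c^{3}}\sin\left(\frac{u}{c}\right)\cos\left(\frac{u}{c}\right)}{\varphi}.
\]
Substituting $\phi^{\prime}=\varphi$ and this expression for $\phi^{\prime\prime}$ into the formula for $\kappa_{\gamma}$ from the Proposition and factoring out $-\frac{\lambda}{c^{2}\varphi}\cos(u/c)$, the surviving bracket becomes $\varphi^{2}+\frac{\lambda^{2}}{c^{2}}\sin^{2}(u/c)$.

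The decisive step --- and the only point where any cancellation is needed --- is the identity $\varphi^{2}+\frac{\lambda^{2}}{c^{2}}\sin^{2}(u/c)=1$, which is immediate from the definition (\ref{c17}) of $\varphi$. This collapses the curvature of the profile curve to
\[
\kappa_{\gamma}=-\frac{\lambda\cos\left(\frac{u}{c}\right)}{c^{2}\varphi}.
\]

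Finally I would insert this $\kappa_{\gamma}$ together with $\phi^{\prime}=\varphi$ into (\ref{C21}): the factor $\lambda\cos(u/c)$ cancels against the denominator, the two occurrences of $\varphi$ cancel, and the two minus signs combine to yield the positive constant $K=1/c^{2}$. There is no genuine obstacle in this argument; the computation is short and entirely mechanical once the Pythagorean identity for $\varphi$ is invoked. The only thing to watch is the sign bookkeeping carried by the minus sign in (\ref{C21}) and the minus sign in $\kappa_{\gamma}$, which must be tracked so that the final answer comes out positive.
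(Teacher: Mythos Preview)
Your argument is correct and follows exactly the route the paper indicates: specialize the Gaussian curvature formula (\ref{C21}) from the Proposition to the profile (\ref{C26}) via (\ref{C20}), using $\phi'=\varphi$ and the identity $\varphi^{2}+\frac{\lambda^{2}}{c^{2}}\sin^{2}(u/c)=1$. The paper merely states ``By the use of (\ref{C20})--(\ref{C21}) with (\ref{C26})'' without writing out the calculation, so your proposal is a faithful and more explicit version of the same proof.
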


As consequence of \textit{(\ref{A7})} we obtain the following result.

\begin{proposition}
Let $M$ be a generalized spherical surface of second kind given with the
parametrization (\ref{C19}). Then the mean curvature vector of $M$ becomes%
\begin{equation}
\overrightarrow{H}=\frac{1}{2f_{2}(u)}\left \{ \kappa _{\rho
}(v)N_{1}+\left( -\kappa _{\gamma }f_{2}(u)+f_{1}^{\prime }(u)\right)
N_{2}\right \} .  \label{C27}
\end{equation}%
where%
\begin{equation*}
\kappa _{\rho }(v)=\sqrt{g_{1}^{\prime \prime }(v)^{2}+g_{2}^{\prime \prime
}(v)^{2}+g_{3}^{\prime \prime }(v)^{2}}.
\end{equation*}
\end{proposition}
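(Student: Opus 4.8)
The plan is to apply the general mean curvature vector formula \eqref{A7} directly, since every ingredient has already been assembled in the proof of the preceding proposition. Here $M\subset\mathbb{E}^{4}$, so $n-2=2$ and the sum in \eqref{A7} runs only over $k=1,2$ with the orthonormal normal frame $\{N_{1},N_{2}\}$ of \eqref{C23}. From \eqref{C24} I read off $g_{11}=1$, $g_{12}=0$, $g_{22}=\lambda^{2}\cos^{2}(u/c)$, so that $W^{2}=g_{11}g_{22}-g_{12}^{2}=\lambda^{2}\cos^{2}(u/c)$; recalling from \eqref{C26} that $f_{2}(u)=\lambda\cos(u/c)$, this is exactly $W^{2}=f_{2}(u)^{2}$. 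The second fundamental coefficients are taken from \eqref{C25}: for $N_{1}$ one has $L_{11}^{1}=L_{12}^{1}=0$ and $L_{22}^{1}=\kappa_{\rho}(v)\lambda\cos(u/c)$, while for $N_{2}$ one has $L_{11}^{2}=-\kappa_{\gamma}(u)$, $L_{12}^{2}=0$ and $L_{22}^{2}=\phi^{\prime}(u)\lambda\cos(u/c)$.

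Since $g_{12}=0$ and $g_{11}=1$, the bracket $L_{11}^{k}g_{22}+L_{22}^{k}g_{11}-2L_{12}^{k}g_{12}$ appearing in \eqref{A7} collapses to $L_{11}^{k}g_{22}+L_{22}^{k}$. For $k=1$ this gives $\kappa_{\rho}\lambda\cos(u/c)$, and for $k=2$ it gives $-\kappa_{\gamma}\lambda^{2}\cos^{2}(u/c)+\phi^{\prime}(u)\lambda\cos(u/c)$. Hence
\begin{equation*}
\overrightarrow{H}=\frac{1}{2\lambda^{2}\cos^{2}\left(\tfrac{u}{c}\right)}\left\{\kappa_{\rho}\lambda\cos\left(\tfrac{u}{c}\right)N_{1}+\lambda\cos\left(\tfrac{u}{c}\right)\left(-\kappa_{\gamma}\lambda\cos\left(\tfrac{u}{c}\right)+\phi^{\prime}(u)\right)N_{2}\right\}.
\end{equation*}
Cancelling one factor $\lambda\cos(u/c)=f_{2}(u)$ against $W^{2}=f_{2}(u)^{2}$, and using $\phi^{\prime}(u)=f_{1}^{\prime}(u)$ (which follows from \eqref{C20} and \eqref{C26}), the right-hand side reduces to the normalized form \eqref{C27}.

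It remains to record the scalar $\kappa_{\rho}(v)$. The Frenet relations for the spherical curve give $\rho^{\prime\prime}(v)=T^{\prime}(v)=\kappa_{\rho}(v)N(v)-\rho(v)$; since $N(v)=N_{1}$ is the unit normal of $\rho$ orthogonal to the position vector $\rho(v)$, pairing with $N$ yields $\kappa_{\rho}(v)=\left\langle\rho^{\prime\prime}(v),N(v)\right\rangle$, which is precisely the factor already isolated in $L_{22}^{1}$. Writing $\rho=(g_{1},g_{2},g_{3})$ so that $\rho^{\prime\prime}=(g_{1}^{\prime\prime},g_{2}^{\prime\prime},g_{3}^{\prime\prime})$ then produces the coordinate expression in the statement. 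There is essentially no analytic obstacle in this argument: the whole proof is a bookkeeping substitution into \eqref{A7} followed by factoring, and the only step demanding a little care is the two identifications $W^{2}=f_{2}^{2}$ and $\phi^{\prime}=f_{1}^{\prime}$ that rewrite the raw output of the curvature formula in the form \eqref{C27}.
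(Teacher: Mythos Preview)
Your derivation of \eqref{C27} is correct and follows exactly the approach the paper indicates: plug the data \eqref{C24}--\eqref{C25} into the general formula \eqref{A7} and simplify using $f_{2}(u)=\lambda\cos(u/c)$ and $f_{1}'(u)=\phi'(u)$. One caveat on your final paragraph: from the spherical Frenet relation $\rho''=\kappa_{\rho}N-\rho$ you correctly obtain $\kappa_{\rho}=\langle\rho'',N\rangle$, but this does \emph{not} equal $\lVert\rho''\rVert=\sqrt{(g_{1}'')^{2}+(g_{2}'')^{2}+(g_{3}'')^{2}}$, since $\rho''$ has a nonzero $-\rho$ component orthogonal to $N$; the displayed coordinate expression for $\kappa_{\rho}$ in the statement is therefore inconsistent with the paper's own Frenet equations (one would have $\lVert\rho''\rVert^{2}=\kappa_{\rho}^{2}+1$), and your last sentence inherits that inconsistency rather than repairing it.
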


\begin{corollary}
Let $M$ be a generalized spherical surface of second kind given with the
parametrization (\ref{C19}). If 
\begin{equation}
\kappa _{\gamma }(u)=\frac{f_{1}^{\prime }(u)}{f_{2}(u)},  \label{C28}
\end{equation}%
then $M$ has vanishing second mean curvature, i.e., $H_{2}=0.$
\end{corollary}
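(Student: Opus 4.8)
The plan is to avoid any fresh computation of the second fundamental form and instead extract $H_2$ as a single scalar coefficient. Recall from (\ref{A7})--(\ref{A8}) that the mean curvature vector splits along the orthonormal normal frame as $\overrightarrow{H}=H_1N_1+H_2N_2$, so that $H_2$ is precisely the component of $\overrightarrow{H}$ in the direction $N_2$. Since the preceding Proposition already records $\overrightarrow{H}$ in the frame $\{N_1,N_2\}$ through formula (\ref{C27}), the first step is simply to read off
\[
H_2=\frac{-\kappa_{\gamma}(u)f_2(u)+f_1'(u)}{2f_2(u)}.
\]

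As a consistency check one can instead derive this scalar directly from (\ref{A8}): using (\ref{C24}) we have $g_{11}=1$, $g_{12}=0$, $g_{22}=\lambda^2\cos^2(u/c)=f_2(u)^2$, hence $W^2=f_2(u)^2$, while (\ref{C25}) gives $L_{11}^2=-\kappa_{\gamma}(u)$, $L_{12}^2=0$ and $L_{22}^2=\phi'(u)\lambda\cos(u/c)=f_1'(u)f_2(u)$ (here $f_1'=\phi'$ by (\ref{C20}) and (\ref{C26})). Substituting these into (\ref{A8}) and cancelling one factor of $f_2(u)$ reproduces the same expression for $H_2$.

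The final step is the substitution that proves the claim: under the hypothesis (\ref{C28}), namely $\kappa_{\gamma}(u)=f_1'(u)/f_2(u)$, the numerator becomes $-\kappa_{\gamma}(u)f_2(u)+f_1'(u)=-f_1'(u)+f_1'(u)=0$, so $H_2=0$ identically. I expect no genuine obstacle here: the entire content of the corollary is already packaged in formula (\ref{C27}) (equivalently in the second fundamental form coefficients (\ref{C25})), and the hypothesis is tailored to annihilate exactly the numerator of $H_2$. The only minor point worth flagging is the apparent typo in (\ref{C25}), where the last line should read $L_{22}^2=\phi'(u)\lambda\cos(u/c)$ rather than a repeated $L_{11}^2$; the computation above uses the intended value.
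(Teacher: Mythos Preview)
Your proof is correct and follows exactly the approach implicit in the paper: the corollary is stated without proof there because it is an immediate consequence of reading off the $N_2$-component $H_2=\dfrac{-\kappa_{\gamma}(u)f_2(u)+f_1'(u)}{2f_2(u)}$ from (\ref{C27}) and observing that the hypothesis (\ref{C28}) annihilates its numerator. Your consistency check via (\ref{A8}) and your observation about the typo in (\ref{C25}) are both accurate and add helpful clarification.
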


\begin{example}
Consider the curve $\rho (v)=\left( \cos v,\cos v\sin v,\sin ^{2}v\right) $
in $S^{2}\subset \mathbb{E}^{3}$. The corresponding \textit{generalized
spherical surface }%
\begin{eqnarray}
x_{1}(u,v) &=&\int \sqrt{1-\frac{\lambda ^{2}}{c^{2}}\sin ^{2}\left( \frac{u%
}{c}\right) }du  \notag \\
x_{2}(u,v) &=&\lambda \cos \left( \frac{u}{c}\right) \cos v  \label{C29} \\
x_{3}(u,v) &=&\lambda \cos \left( \frac{u}{c}\right) \cos v\sin v  \notag \\
x_{4}(u,v) &=&\lambda \cos \left( \frac{u}{c}\right) \sin ^{2}v.  \notag
\end{eqnarray}%
is of second kind.
\end{example}

\begin{tabular}{l}
Beng\"{u} K\i l\i \c{c} Bayram \\ 
Department of Mathematics \\ 
Bal\i kesir University \\ 
Bal\i kesir, TURKEY \\ 
E-mail: benguk@bal\i kesir.edu.tr%
\end{tabular}

\bigskip

\begin{tabular}{l}
Kadri Arslan and Bet\"{u}l Bulca \\ 
Department of Mathematics \\ 
Uluda\u{g} University \\ 
16059 Bursa, TURKEY \\ 
E-mail: arslan@uludag.edu.tr,\ bbulca@uludag.edu.tr\  \  \  \ 
\end{tabular}

\bigskip

\end{document}